\newcommand{\bburl}[1]{\textcolor{blue}{\url{#1}}}
\newcommand{\seqnum}[1]{\href{https://oeis.org/#1}{\rm \underline{#1}}}
\newtheorem{thm}{Theorem}[section]
\newtheorem{cor}[thm]{Corollary}
\newtheorem{claim}[thm]{Claim}
\newtheorem{lem}[thm]{Lemma}
\newtheorem{prop}[thm]{Proposition}
\newtheorem{defi}[thm]{Definition}
\newtheorem{rek}[thm]{Remark}
\numberwithin{equation}{section}
\DeclareFontFamily{U}{mathx}{}
\DeclareFontShape{U}{mathx}{m}{n}{<-> mathx10}{}
\DeclareSymbolFont{mathx}{U}{mathx}{m}{n}
\DeclareMathAccent{\widehat}{0}{mathx}{"70}
\DeclareMathAccent{\widecheck}{0}{mathx}{"71}
\begin{document}

\title{On Schreier-type Sets, Partitions, and Compositions}

\author{Kevin Beanland}
\email{\textcolor{blue}{\href{mailto:beanlandk@wlu.edu}{beanlandk@wlu.edu}}}
\address{Department of Mathematics, Washington and Lee University, Lexington, VA 24450, USA}

\author{H\`ung Vi\d{\^e}t Chu}
\email{\textcolor{blue}{\href{mailto:hungchu2@illinois.edu}{hungchu1@tamu.edu}}}
\address{Department of Mathematics, Texas A\&M University, College Station, IL 77843, USA}

\begin{abstract} 
A nonempty set $A\subset\mathbb{N}$ is $\ell$-strong Schreier if $\min A\geqslant \ell|A|-\ell+1$. We define a set of positive integers to be sparse if either the set has at most two numbers or the differences between consecutive numbers in increasing order are non-decreasing. This note establishes a connection between sparse Schreier-type sets and (restricted) partition numbers. One of our results states that if $\mathcal{G}_{n,\ell}$ consists of partitions of $n$ that contain no parts in $\{2, \ldots, \ell\}$, and
\begin{equation*}
\mathcal{A}_{n,\ell}  \ :=\ \{A\subset \{1, \ldots, n\}\,:\,  n\in A, A\mbox{ is sparse and }\ell\mbox{-strong Schreier}\},
\end{equation*}
then 
$$|\mathcal{A}_{n,\ell}|\ =\ |\mathcal{G}_{n-1,\ell}|, \quad n, \ell\in \mathbb{N}.$$
The special case $\mathcal{G}_{n-1, 1}$ consists of all partitions of $n-1$. Besides partitions, integer compositions are also investigated.
\end{abstract}

\subjclass[2020]{05A19; 05A17; 11Y55; 11B37}

\keywords{Schreier sets; partitions; sequences}

\thanks{}

\maketitle

\section{Introduction}
A set $A\subset \mathbb{N}$ is said to be \textit{Schreier} if either $A$ is empty or $\min A\geqslant |A|$. Schreier sets are an important concept with many applications in Banach space theory \cite{AG, Od, S}. These sets have also been studied as purely combinatorial objects in Ramsey theory \cite{FN}. In 2012, Bird \cite{B} had a surprising observation that \begin{equation}\label{e1}\#\{A\subset \{1, \ldots, n\}: A\mbox{ is Schreier  and } n\in A\} \ =\ F_n,\end{equation} where $(F_n)_{n\geqslant 1}$ is the Fibonacci sequence with $F_1 = F_2 = 1$ and $F_{n} = F_{n-1}+F_{n-2}$ for $n\geqslant 3$. Subsequent works have modified the Schreier condition or added extra conditions on the set $A$ to discover many unexpected results. For example, counting Schreier sets that are an arithmetic progression gives the number of edges of the (modified) Tur\'{a}n graphs \cite{BCF, C3}. Meanwhile, the general linear relation $q\min A\geqslant p|A|$ gives 
a higher-order linear recurrence of depth $p+q$, where positive and negative terms alternate \cite{BCF, CMX}. Recently, several authors \cite{CIMSZ} generalized \eqref{e1} to Schreier multisets and the $s$-step Fibonacci sequences and moreover, considered the nonlinear Schreier condition $\sqrt[k]{\min A}\geqslant |A|$ for $k\geqslant 2$.

In this note, we introduce (\textit{strongly}) \textit{sparse} Schreier-type sets and connect them to integer partitions. 
\begin{defi}\normalfont
We call a set $A = \{a_1, \ldots, a_n\}\subset\mathbb{N}$ $(a_1 < \cdots < a_n)$ \textit{sparse} if either $|A|\leqslant 2$ or 
$$|A|\ \geqslant\ 3 \mbox{ and }a_{i}-a_{i-1} \ \geqslant\ a_{i-1} - a_{i-2}, \quad 3\leqslant i\leqslant n.$$ 
Furthermore, $A$ is \textit{strongly sparse} if either $|A|\leqslant 2$ or
$$|A|\ \geqslant\ 3 \mbox{ and }a_{i}-a_{i-1} \ >\ a_{i-1} - a_{i-2}, \quad 3\leqslant i\leqslant n.$$
\end{defi}
With an abuse of notation, we may write $a_1 < a_2 < \cdots < a_k$ to mean the set $\{a_1, a_2, \ldots, a_k\}$ with the additional information about the ordering of elements in the set. For $F = a_1 < \cdots < a_n\subset \mathbb{N}$ with $n\geqslant 2$, let
$$\mathcal{D}(F) \ =\ \{a_2-a_1,\ldots,  a_n-a_{n-1}\},$$
which is a multiset. 
Our main results are motivated by the following proposition.

\begin{prop}\label{mp}
Let $m,n\in \mathbb{N}$. Define
$$\mathcal{F}_{n,m}\ =\ \{F\subset \{1,\ldots, n\}\,:\, F\mbox{ is Schreier and sparse }, n\in F, \min F = m\}.$$
Then for $m\leqslant n-1$, 
\begin{equation*}|\mathcal{F}_{n,m+1}| \ =\ p(n-1, m),\end{equation*}
where $p(u, k)$ is the number of partitions of $u$ into exactly $k$ parts.
\end{prop}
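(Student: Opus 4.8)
The plan is to set up an explicit bijection between $\mathcal{F}_{n,m+1}$ and the set of partitions of $n-1$ into exactly $m$ parts. The natural object to look at is the difference multiset $\mathcal{D}(F)$. Given a set $F = a_1 < a_2 < \cdots < a_k$ in $\mathcal{F}_{n,m+1}$, we have $a_1 = m+1$ and $a_k = n$. The sparse condition says the consecutive gaps $d_i := a_{i+1} - a_i$ are non-decreasing, and the telescoping sum gives $\sum_{i=1}^{k-1} d_i = a_k - a_1 = n - (m+1) = n - m - 1$. So $\mathcal{D}(F)$ is a multiset of $k-1$ positive integers that, arranged in non-decreasing order, forms a partition of $n-m-1$ into exactly $k-1$ parts.

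\medskip

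First I would make precise the target of the bijection. The first gap $a_2 - a_1$ accounts for going from $m+1$ up, but I want to land on partitions of $n-1$, not $n-m-1$. The trick is to fold the minimum back in: since $\min F = m+1$, I would append $m$ copies of the value $1$ — or more cleanly, record the starting point $m+1$ as an additional structural piece — so that the total becomes $n-1$ with the right number of parts. Concretely, I would send $F$ to the partition whose parts are $\{d_1, d_2, \ldots, d_{k-1}\}$ together with enough $1$'s to bring the part-count to $m$ and the sum to $n-1$. Let me check the arithmetic: a sparse set $F$ with $\min F = m+1$ has at least its first gap constrained by the Schreier condition, and I expect that tracking both the gap multiset and the minimum will naturally produce a partition of $(n-1) = (n-m-1) + m$ into exactly $m$ parts, where the $m$ extra units come from the Schreier/minimum bookkeeping.

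\medskip

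The key steps are then: (i) verify that the map is well-defined, i.e.\ that the image is genuinely a partition of $n-1$ into exactly $m$ parts and that the Schreier condition $\min F \geqslant |F|$ is automatically respected by the construction (this is where the constant $m$ in $\min F = m+1$ and the count $|F| = k$ must be reconciled); (ii) construct the inverse map explicitly, taking a partition $\lambda_1 \geqslant \cdots \geqslant \lambda_m$ of $n-1$ into $m$ parts and reading off the gaps in non-decreasing order to rebuild $F$ via partial sums anchored at $m+1$, ending at $n$; and (iii) confirm the sparse condition (non-decreasing gaps) corresponds exactly to the partition being a legitimate weakly-ordered sequence, and that $\min F = m+1$ is forced. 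I would present both maps and then check they are mutually inverse.

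\medskip

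The main obstacle I anticipate is the reconciliation of the two "$m$"s: the minimum $\min F = m+1$ fixes $a_1$, while the requirement that the output be a partition into \emph{exactly} $m$ parts demands that the number of gaps plus the Schreier-slack combine correctly. The Schreier condition $a_1 \geqslant k$ (i.e.\ $m+1 \geqslant k$, so $k \leqslant m+1$ and hence the number of gaps $k-1 \leqslant m$) is precisely what guarantees there is "room" to pad with the right number of $1$'s to reach exactly $m$ parts; I expect the cleanest formulation encodes the Schreier slack $m+1-k$ as additional unit parts. Verifying that this padding is reversible — that from a partition into exactly $m$ parts one can uniquely recover which parts were genuine gaps and which were slack-induced units, while keeping the gaps non-decreasing and the minimum correct — is the delicate point, and I would handle it by decomposing each partition $\lambda$ of $n-1$ uniquely as a partition $\mu$ of $n-m-1$ (into at most $m$ parts, giving the gaps after removing one from each of the $m$ parts, or a similar normalization) plus the forced structure, matching Proposition~\ref{mp}'s claim that $|\mathcal{F}_{n,m+1}| = p(n-1,m)$.
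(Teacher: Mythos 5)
Your overall strategy is the same as the paper's: biject $\mathcal{F}_{n,m+1}$ with $P(n-1,m)$ via the gap multiset $\mathcal{D}(F)$, using the Schreier slack $m+1-|F|$ to pad up to exactly $m$ parts. However, the explicit map you write down in your second paragraph does not work. If $F$ has $k$ elements, it has $k-1$ gaps summing to $n-m-1$, and padding with $j$ copies of $1$ yields $k-1+j$ parts summing to $n-m-1+j$; you cannot choose $j$ to make the part-count equal $m$ \emph{and} the sum equal $n-1$ simultaneously (the first forces $j=m-k+1$, the second forces $j=m$, and these agree only when $k=1$). So the map as stated is not well-defined, and you never commit to a corrected version --- you only say you ``expect'' the bookkeeping to work out.

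The fix is the normalization you mention in passing in your last paragraph but do not adopt: add $1$ to \emph{every} gap and then pad with $m+1-k$ ones, giving $(k-1)+(m+1-k)=m$ parts summing to $(n-m-1)+(k-1)+(m+1-k)=n-1$. Equivalently (and this is how the paper phrases it), first replace $F$ by the multiset $\mathcal{R}(F)$ obtained by adjoining $m+1-|F|$ extra copies of $m+1$, then take $\mathcal{D}(\mathcal{R}(F))+1$. The inverse is exactly your ``remove one from each of the $m$ parts'' decomposition. Your points (i)--(iii) --- well-definedness, the role of the Schreier bound $k\leqslant m+1$ in guaranteeing enough room to pad, and the fact that sparseness makes $F$ recoverable from the unordered gap multiset (which is what gives injectivity) --- are all the right checks; the proposal just needs the actual map pinned down correctly before those checks can be carried out.
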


\begin{proof}
Let $P(n-1,m)$ consist of all partitions of $n-1$ into exactly $m$ parts. If $m = n-1$, then $|\mathcal{F}_{n,m+1}| = p(n-1,m) = 1$. Assume $n> m+1$. Define a function $\mathcal{R}$ on $F\in \mathcal{F}_{n,m+1}$ as
$$\mathcal{R}(F) \ =\  \{\underbrace{m+1, \ldots, m+1}_{m+1-|F|}\}\cup F.$$
Note that $\mathcal{R}$ gives multisets.  Since $n>m+1$, each set $F$ in $\mathcal{F}_{n,m+1}$ contains at least two numbers. Therefore, $\mathcal{D}$ can take $\mathcal{R}(F)$ as an input. Define $\mathcal{S}:\mathcal{F}_{n,m+1}\rightarrow P(n-1, m)$ as
$\mathcal{S}(F) = \mathcal{D}(\mathcal{R}(F)) + 1$.
Here elements of the set $\mathcal{D}(\mathcal{R}(F)) + 1$ represent parts of a partition.
We show that $\mathcal{S}$ is well-defined and bijective.

Let $F\in \mathcal{F}_{n,m+1}$. Since $\min F = m+1$ and $\max F = n$, the sum of $m$ numbers in $\mathcal{D}(\mathcal{R}(F))$ is $n-m-1$. Therefore, the sum of $m$ parts given by $\mathcal{S}(F)$ is 
$(n-m-1) + m\ =\ n-1$, thus $\mathcal{S}(F)\in P(n-1, m)$, which shows that $\mathcal{S}$ is well-defined. Injectivity is obvious. 
To see surjectivity, choose an arbitrary partition $\{n_1 \leqslant \cdots \leqslant n_m\}$ of $n-1$ with $n_1 \leqslant \cdots \leqslant n_m$. Let $t$ be the smallest (if any) such that $n_t\geqslant 2$. 
It is not hard to see that 
$\mathcal{S}(F) = \{n_1, \ldots, n_m\}$, 
where 
$$F \ =\ \left\{m+1, m+1+(n_t-1), \ldots, m+1+\sum_{i=t}^m (n_i-1)\right\}\ \in\ \mathcal{F}_{n,m+1}.$$
This completes the proof. 
\end{proof}

\begin{cor}\label{c5}
For $n\geqslant 1$, let 
\begin{equation}\label{e61}\mathcal{F}_{n}\ =\ \{F\subset \{1,\ldots, n\}\,:\, F\mbox{ is Schreier and sparse }, n\in F\}.\end{equation}
It holds that $|\mathcal{F}_n| = p(n-1)$, 
where $p(u)$ is the partition number of $u$. Here $p(0) = 1$ by convention. 
\end{cor}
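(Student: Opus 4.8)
The plan is to partition $\mathcal{F}_n$ according to the value of $\min F$ and then invoke Proposition \ref{mp} on each piece. Since every $F \in \mathcal{F}_n$ is nonempty (it contains $n$) and Schreier, its minimum satisfies $1 \leqslant \min F \leqslant n$; writing $\min F = m+1$, this ranges $m$ over $\{0, 1, \ldots, n-1\}$, which is exactly the range $m \leqslant n-1$ covered by the proposition. Because the sets $\mathcal{F}_{n,m+1}$ for distinct values of $m$ are pairwise disjoint and together exhaust $\mathcal{F}_n$, I would obtain
\begin{equation*}
|\mathcal{F}_n| \ =\ \sum_{m=0}^{n-1} |\mathcal{F}_{n,m+1}| \ =\ \sum_{m=0}^{n-1} p(n-1, m).
\end{equation*}

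Next I would recognize the right-hand sum as a complete enumeration of the partitions of $n-1$. Any partition of $n-1$ has at least $1$ and at most $n-1$ parts (each part being at least $1$), so summing $p(n-1, m)$ over all admissible numbers of parts recovers the full partition number:
\begin{equation*}
\sum_{m=0}^{n-1} p(n-1, m) \ =\ p(n-1).
\end{equation*}
For $n \geqslant 2$ the term $m = 0$ contributes $p(n-1, 0) = 0$ and may simply be dropped, whereas for $n = 1$ the single term $p(0, 0) = 1$ must be kept, matching the stated convention $p(0) = 1$.

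The only point requiring care is the bookkeeping at the boundary of the index range, which I would verify directly rather than treat as a serious obstacle. The term $m = 0$ corresponds to sets with $\min F = 1$; the Schreier condition $\min F \geqslant |F|$ then forces $|F| = 1$, hence $F = \{1\}$, which belongs to $\mathcal{F}_n$ only when $n = 1$. This confirms that $\mathcal{F}_{n,1}$ is empty for $n \geqslant 2$ and equals $\{\{1\}\}$ for $n = 1$, consistent with the values of $p(n-1,0)$ and with the convention $p(0) = 1$. Once the decomposition by minimum element is in place, the corollary follows immediately from Proposition \ref{mp} combined with the elementary identity $\sum_{m} p(u,m) = p(u)$.
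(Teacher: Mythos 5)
Your proof is correct and is essentially the paper's own (implicit) argument: the paper states the corollary without a separate proof precisely because it follows from Proposition \ref{mp} by decomposing $\mathcal{F}_n$ according to the value of $\min F$ and summing $p(n-1,m)$ over $m$, which is exactly what you do. Your boundary checks (the $m=0$ term and the $n=1$ case) are careful and consistent with the paper's conventions, so there is nothing to add.
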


Corollary \ref{c5} shows a nice relation between Schreier-type sets and partition numbers, inspiring us to look at restricted partitions, where certain rules are applied to the parts in each partition. In particular, we are concerned with the following sequences 
\seqnum{A000041} \seqnum{A002865}, \seqnum{A008483}, \seqnum{A008484}, \seqnum{A185325},
\seqnum{A000009}, \seqnum{A025147}, \seqnum{A025148}, \seqnum{A025149}, \seqnum{A025150}, \seqnum{A025151},
\seqnum{A000070}, \seqnum{A027336}, \seqnum{A036469}, \seqnum{A038348}
in the Online Encyclopedia of Integer Sequences (OEIS) \cite{Sl}.

\begin{defi}\normalfont
For $\ell\in \mathbb{N}_{\geqslant 0}$, a finite set $A\subset\mathbb{N}$ is said to be \textit{$\ell$-strong Schreier} if either $A = \emptyset$ or 
$$\min A\ \geqslant\ \ell|A|-\ell+1.$$
\end{defi}
\begin{rek}\normalfont
This definition is natural in the sense that any finite subset of $\mathbb{N}$ is $0$-strong Schreier. If $\ell\geqslant k$, an $\ell$-strong Schreier set is $k$-strong Schreier, and when $\ell = 1$, the $1$-strong Schreier property is the same as the Schreier property. Any set of size $1$ (singleton) is $\ell$-strong Schreier for all $\ell$.
\end{rek}
For $n\geqslant 1$ and $\ell\geqslant 0$, we define the objects to be studied
\begin{itemize}
\item $\mathcal{A}_{n,\ell}  := \{A\subset \{1, \ldots, n\}:  n\in A, A\mbox{ is sparse and }\ell\mbox{-strong Schreier}\}$,
\item $\mathcal{A}^{s}_{n,\ell}  := \{A\subset \{1, \ldots, n\}:  n\in A, A\mbox{ is strongly sparse and }\ell\mbox{-strong Schreier}\}$,
\item $\mathcal{E}_{n,\ell}$: the set of partitions of $n$ into parts which are at least $\ell$, and
\item $\mathcal{E}^{d}_{n,\ell}$: the set of partitions of $n$ into distinct parts, each of which is at least $\ell$. 
\end{itemize}
By convention, $|\mathcal{E}_{0,\ell}| = |\mathcal{E}^d_{0,\ell}| = 1$. For reference, we include tables for initial values of $|\mathcal{A}_{n,\ell}|, |\mathcal{A}^s_{n,\ell}|, |\mathcal{E}_{n,\ell}|$, and $|\mathcal{E}^d_{n,\ell}|$ in the appendix.

\begin{thm}\label{m1}
The following identities hold
\begin{align}\label{e31}|\mathcal{A}_{n,\ell}|&\ =\ \sum_{i=0}^{n-1} |\mathcal{E}_{i,\ell+1}|, \quad n\geqslant 1, \ell\geqslant 0,\mbox{ and }\\
\label{e32}|\mathcal{A}^{s}_{n,\ell}|& \ =\ \sum_{i=0}^{n-1} |\mathcal{E}^{d}_{i,\ell+1}|,\quad n\geqslant 1, \ell\geqslant 0.
\end{align}
\end{thm}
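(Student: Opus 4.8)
The plan is to prove both identities at once by exhibiting, for fixed $n$ and $\ell$, an explicit bijection
$$\Phi\colon \mathcal{A}_{n,\ell}\ \longrightarrow\ \bigsqcup_{i=0}^{n-1}\mathcal{E}_{i,\ell+1},$$
which then restricts to a bijection from $\mathcal{A}^{s}_{n,\ell}$ onto $\bigsqcup_{i=0}^{n-1}\mathcal{E}^{d}_{i,\ell+1}$. Since the sets $\mathcal{E}_{i,\ell+1}$ (resp. $\mathcal{E}^{d}_{i,\ell+1}$) are pairwise disjoint for distinct $i$, such a map yields \eqref{e31} and \eqref{e32} simultaneously. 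The map is the difference operator $\mathcal{D}$ shifted by $\ell$: given $A=\{a_1<\cdots<a_k\}\in\mathcal{A}_{n,\ell}$ with $a_k=n$, we set $\Phi(A)$ equal to the empty partition when $k=1$, and otherwise to the multiset
$$\Phi(A)\ =\ \mathcal{D}(A)+\ell\ =\ \{(a_{j+1}-a_j)+\ell\,:\,1\leqslant j\leqslant k-1\},$$
read as a list of parts.

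First I would check that $\Phi$ is well-defined. Each part $(a_{j+1}-a_j)+\ell$ is at least $\ell+1$ because $a_{j+1}-a_j\geqslant 1$, so $\Phi(A)$ is a partition into parts $\geqslant\ell+1$; sparseness of $A$ says the consecutive differences are non-decreasing, so the parts form a legitimate weakly increasing list, while strong sparseness makes the differences strictly increasing, that is, the parts distinct. The sum of the parts is
$$i\ :=\ \sum_{j=1}^{k-1}(a_{j+1}-a_j)+\ell(k-1)\ =\ (n-a_1)+\ell(k-1),$$
and the $\ell$-strong Schreier condition $a_1\geqslant\ell(k-1)+1$ is exactly equivalent to $i\leqslant n-1$ (this formula also returns $i=0$ in the singleton case $k=1$, matching the empty partition). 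Hence $\Phi(A)\in\mathcal{E}_{i,\ell+1}$ with $0\leqslant i\leqslant n-1$, and $\Phi(A)\in\mathcal{E}^{d}_{i,\ell+1}$ when $A$ is strongly sparse.

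Next come injectivity and surjectivity, both of which are clean because every $A$ shares the fixed maximum $a_k=n$. The partition $\Phi(A)$ determines $A$ completely: the parts give back the differences $d_j=g_j-\ell$ in increasing order, and $a_1=n-\sum_j d_j$ recovers the minimum, hence the whole chain; this gives injectivity. For surjectivity, take any partition $\pi$ of some $i\in\{0,\ldots,n-1\}$ into parts $\ell+1\leqslant g_1\leqslant\cdots\leqslant g_{k-1}$, define $d_j=g_j-\ell\geqslant 1$, set $a_1=n-i+\ell(k-1)$, and put $a_j=a_1+\sum_{t<j}d_t$. A direct computation gives $a_k=n$; the bound $i\leqslant n-1$ forces $a_1\geqslant\ell(k-1)+1$, so $A$ is $\ell$-strong Schreier; and the $d_j$ being non-decreasing makes $A$ sparse. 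Thus $A\in\mathcal{A}_{n,\ell}$ with $\Phi(A)=\pi$. Restricting to distinct parts $g_1<\cdots<g_{k-1}$ produces strictly increasing $d_j$, i.e. a strongly sparse $A$, which establishes \eqref{e32}.

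Rather than a deep obstacle, the main thing to watch is the boundary bookkeeping: correctly matching the degenerate singleton $A=\{n\}$ (the case $k=1$, where $\mathcal{D}$ is undefined) to the empty partition counted by $|\mathcal{E}_{0,\ell+1}|=1$, and confirming that the single inequality $a_1\geqslant\ell(k-1)+1$ translates precisely into the range $0\leqslant i\leqslant n-1$ that generates the summation on the right-hand side. Everything else is the routine verification that the shifted difference map and the reconstruction above are mutually inverse.
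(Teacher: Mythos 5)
Your proof is correct, and it takes a genuinely different route from the paper. You construct a single global bijection $\Phi(A)=\mathcal{D}(A)+\ell$ from $\mathcal{A}_{n,\ell}$ onto the disjoint union $\bigsqcup_{i=0}^{n-1}\mathcal{E}_{i,\ell+1}$, indexing the target by $i=(n-\min A)+\ell(|A|-1)$; the key observations--that sparseness makes the multiset of differences recoverable in sorted order (hence injectivity and a canonical preimage), and that the $\ell$-strong Schreier inequality $\min A\geqslant\ell(|A|-1)+1$ is exactly equivalent to $i\leqslant n-1$--all check out, as does the boundary matching of $\{n\}$ with the empty partition and the restriction to strongly sparse sets versus distinct parts. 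The paper instead proceeds by telescoping: it computes $|\mathcal{A}_{n+1,\ell}|-|\mathcal{A}_{n,\ell}|$ via an injection $A\mapsto(A\setminus\{n\})\cup\{n+1\}$, identifies the complement of the image with an auxiliary family $\mathcal{F}_{n,\ell}$ of sparse Schreier sets satisfying $n+1+a_{p-1}=2a_p$ together with $\{n,n+1\}$, and then proves $|\mathcal{F}_{n,\ell}|+1=|\mathcal{E}_{n,\ell+1}|$ through a stratified bijection (Lemma \ref{l1}) refined by both the number of parts and the gap between the two largest parts. The paper remarks that the ``naive division'' of $\mathcal{A}_{n,\ell}$ by minimum alone fails for $\ell\geqslant 1$; your partition by the statistic $i$, which mixes the minimum and the cardinality, is precisely the cleverer division that makes a direct argument work. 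Your approach is shorter, handles both identities uniformly with one construction, and avoids the auxiliary family $\mathcal{F}_{n,\ell}$ entirely; the paper's incremental approach has the side benefit of isolating the difference $|\mathcal{A}_{n+1,\ell}|-|\mathcal{A}_{n,\ell}|$ and producing the intermediate identity $|\mathcal{E}_{n,\ell+1}|=|\mathcal{F}_{n,\ell}|+1$, which is of some independent combinatorial interest.
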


Thanks to Theorem \ref{m1}, we obtain the next theorem, whose short proof utilizes generating functions. 
For $\ell\geqslant 0$, define 
\begin{itemize}
    \item $\mathcal{G}_{n,\ell}$ to consist of partitions of $n$, which have no parts in $\{2,\ldots, \ell\}$, and
    \item $\mathcal{H}_{n,\ell}$ to consist of partitions of $n$, which has no parts in $\{2,\ldots, \ell\}$ and no even parts greater than $2\ell$.
\end{itemize}
Here when $\ell \leqslant 1$, $\{2,\ldots, \ell\} = \emptyset$.
By convention, $|\mathcal{G}_{0,\ell}| = |\mathcal{H}_{0,\ell}| = 1$. The appendix contains instances of $|\mathcal{G}_{n,\ell}|$ and $|\mathcal{H}_{n,\ell}|$.

\begin{thm}\label{c3}
For $n, \ell\in \mathbb{N}$, we have  
\begin{align}
\label{e40}|\mathcal{A}_{n,\ell}|&\ =\ |\mathcal{G}_{n-1,\ell}|\\
\label{e41}|\mathcal{A}^s_{n,\ell}|&\ =\ |\mathcal{H}_{n-1,\ell}|.
\end{align}
\end{thm}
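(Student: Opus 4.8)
The plan is to combine Theorem~\ref{m1} with a short generating-function computation, reducing both identities to equalities of formal power series. By \eqref{e31} and \eqref{e32} it suffices to prove that $\sum_{i=0}^{n-1}|\mathcal{E}_{i,\ell+1}| = |\mathcal{G}_{n-1,\ell}|$ and $\sum_{i=0}^{n-1}|\mathcal{E}^{d}_{i,\ell+1}| = |\mathcal{H}_{n-1,\ell}|$ for every $n\geqslant 1$. Since a partial sum $\sum_{i=0}^{n-1}a_i$ is the coefficient of $x^{n-1}$ in $\frac{1}{1-x}\sum_{i\geqslant 0}a_i x^i$, the whole theorem follows once I identify the two relevant products and compare coefficients of $x^{n-1}$.

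For \eqref{e40} I would record the standard series $\sum_{i\geqslant 0}|\mathcal{E}_{i,\ell+1}|x^i = \prod_{k\geqslant\ell+1}(1-x^k)^{-1}$, so that the partial sums are generated by $\frac{1}{1-x}\prod_{k\geqslant\ell+1}(1-x^k)^{-1}$. This product is precisely the generating function for partitions whose parts are either equal to $1$ or at least $\ell+1$ — that is, partitions avoiding every part in $\{2,\ldots,\ell\}$ — which by definition equals $\sum_{m\geqslant 0}|\mathcal{G}_{m,\ell}|x^m$. Comparing coefficients of $x^{n-1}$ yields \eqref{e40}; the degenerate range $\ell\leqslant 1$, where $\{2,\ldots,\ell\}=\emptyset$, is handled automatically, since the product then becomes $\prod_{k\geqslant 1}(1-x^k)^{-1}$.

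For \eqref{e41} the corresponding series is $\sum_{i\geqslant 0}|\mathcal{E}^{d}_{i,\ell+1}|x^i = \prod_{k\geqslant\ell+1}(1+x^k)$, and I must show that $\frac{1}{1-x}\prod_{k\geqslant\ell+1}(1+x^k)$ is the generating function of $\mathcal{H}_{m,\ell}$. The key manipulation is the Euler substitution $1+x^k = (1-x^{2k})/(1-x^k)$, which turns $\prod_{k\geqslant\ell+1}(1+x^k)$ into $\prod_{k\geqslant\ell+1}(1-x^{2k})\big/\prod_{k\geqslant\ell+1}(1-x^k)$. Splitting the denominator into its even and odd factors and cancelling the numerator (whose factors run over even exponents $\geqslant 2\ell+2$) against the even factors of the denominator, the surviving even factors are exactly those with exponent in $[\ell+1,2\ell]$. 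This leaves $\prod_{\ell+1\leqslant k\leqslant 2\ell,\,k\text{ even}}(1-x^k)^{-1}\prod_{k\geqslant\ell+1,\,k\text{ odd}}(1-x^k)^{-1}$, and after restoring the factor $\frac{1}{1-x}$ this is the generating function for partitions whose parts are $1$, odd and $\geqslant\ell+1$, or even and in $[\ell+1,2\ell]$ — i.e.\ partitions avoiding $\{2,\ldots,\ell\}$ and avoiding even parts greater than $2\ell$, which is exactly $\mathcal{H}_{m,\ell}$.

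The routine parts are the textbook generating-function dictionary and the final coefficient comparison. The one place demanding care — and the main obstacle — is the bookkeeping in the Euler substitution for \eqref{e41}: I must verify precisely which even exponents cancel, confirm that all odd parts $\geqslant\ell+1$ survive with unrestricted multiplicity, and check the boundary behaviour when $\ell$ is small so that the range $[\ell+1,2\ell]$ and the index sets are interpreted correctly.
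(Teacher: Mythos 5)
Your proposal is correct and takes essentially the same route as the paper: both reduce the theorem to Theorem \ref{m1} plus the identification of $\prod_{k\geqslant\ell+1}(1+x^k)$ with $\prod_{i=\ell+1}^{2\ell}(1-x^i)^{-1}\prod_{j\geqslant\ell}(1-x^{2j+1})^{-1}$ via the Euler substitution. The only cosmetic difference is that you absorb the partial sums into a factor $\tfrac{1}{1-x}$ and compare generating functions directly, whereas the paper equivalently shows that the first differences $|\mathcal{G}_{n,\ell}|-|\mathcal{G}_{n-1,\ell}|$ and $|\mathcal{H}_{n,\ell}|-|\mathcal{H}_{n-1,\ell}|$ have generating functions $\Psi(x)$ and then telescopes.
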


\begin{rek}\normalfont
Note that $(|\mathcal{G}_{n,1})_{n=0}^\infty$ is the sequence of partition numbers. In this case, \eqref{e40} is the same as Corollary \ref{c5}.
The sequence $(|\mathcal{H}_{n,1}|)_{n=0}^\infty$ counts partitions of $n$ that contain no even parts except $2$. Using generating functions, it is not hard to verify that $(|\mathcal{H}_{n,1}|)_{n=0}^\infty$ is also equal to the number of partitions of $n$ that contain at most one even part. This is the sequence \seqnum{A038348}. The two sequences $(|\mathcal{H}_{n,2}|)_{n=0}^\infty$ and $(|\mathcal{H}_{n,3}|)_{n=0}^\infty$ are new and not available in OEIS. 
\end{rek}

A surprising feature of the identities in Theorems \ref{m1} and \ref{c3} is that while the left side involves the $\ell$-strong Schreier property, the right side cleanly counts partitions excluding certain parts.
A natural approach to prove Theorems \ref{m1} and \ref{c3} is to ``divide and conquer". Specifically for \eqref{e31}, we may divide $\mathcal{A}_{n,\ell}$ into $n$ collections of sets based on their minimum and hope that the cardinality of each collection is equal to a corresponding summand on the right side. This approach works in the simplest case when $\ell = 0$, but the naive division fails in general.



Similarly, we may attempt to prove \eqref{e40} by grouping $\mathcal{A}_{n,\ell}$ into sets having the same minimum, while dividing $\mathcal{G}_{n-1, \ell}$ into groups of partitions having the same number of parts. This idea works when $\ell = 1$ as we have seen in the proof of Proposition \ref{mp}. However, the idea again fails for $\ell\geqslant 2$. 

The above discussion suggests that to establish Theorems \ref{m1} and \ref{c3}, we need a more clever way to divide and conquer, which we shall show in the next section.

We end this note with a result about compositions of positive integers. A composition of $n$ is a way to write $n$ as a sum of positive integers, and two sums whose terms are ordered differently constitute two different compositions. The definition of compositions is thus different from that of partitions, whose order of terms does not matter. We shall connect $\ell$-strong Schreier sets to compositions by removing the sparse condition on sets in the definition of $\mathcal{A}_{n,\ell}$. For $m, n\in  \mathbb{N}$ and $\ell\geqslant 0$, define
\begin{align*}
    \mathcal{B}_{n,\ell} &\ =\ \{B\subset \{1, \ldots, n\}:  n\in B, B\mbox{ is }\ell\mbox{-strong Schreier}\}\\
    \mathcal{B}_{n,\ell,m} &\ =\ \{B\subset \{1, \ldots, n\}:  n\in B, |B| = m, B\mbox{ is }\ell\mbox{-strong Schreier}\}.
\end{align*}
\begin{thm}\label{m2}
For $m, n\in  \mathbb{N}$ and $\ell\geqslant 0$, 
$$|\mathcal{B}_{n, \ell, m}| \ =\ c(n+\ell, \ell+1, m),$$
where $c(u, v, s)$ is the number of compositions of $u$ into exactly $s$ parts, each of which is at least $v$. As a result,
$$|\mathcal{B}_{n,\ell}|\ =\ c(n+\ell, \ell+1),$$
where $c(u,v)$ counts compositions of $u$ into parts that are at least $v$. 
\end{thm}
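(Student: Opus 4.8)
The plan is to construct an explicit bijection between $\mathcal{B}_{n,\ell,m}$ and the collection of compositions of $n+\ell$ into exactly $m$ parts, each at least $\ell+1$. First I would encode a set $B = \{b_1 < b_2 < \cdots < b_m = n\}\in\mathcal{B}_{n,\ell,m}$ by its minimum together with its consecutive gaps: set $c_1 = b_1$ and $c_i = b_i - b_{i-1}$ for $2\leqslant i\leqslant m$. Then $(c_1,\ldots,c_m)$ is a composition of $n = b_m$ in which $c_i\geqslant 1$ for $i\geqslant 2$ (strict increase) and the $\ell$-strong Schreier condition reads exactly $c_1 = \min B\geqslant \ell(m-1)+1$. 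The obstacle to stopping here is that the parts are not uniformly bounded: the first part carries a large lower bound while the rest satisfy only $\geqslant 1$, so this is not yet a composition into parts that are all at least $\ell+1$.

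The key step is to redistribute this asymmetry by a shift-and-rotate. Define
$$\Phi(B)\ =\ (c_2+\ell,\; c_3+\ell,\; \ldots,\; c_m+\ell,\; c_1-(m-2)\ell).$$
Each of the first $m-1$ entries is $\geqslant \ell+1$ since $c_i\geqslant 1$ for $i\geqslant 2$, while the last entry satisfies $c_1-(m-2)\ell\geqslant \ell+1$ precisely because of the Schreier inequality $c_1\geqslant \ell(m-1)+1$. A short computation shows the entries sum to $(n-c_1)+(m-1)\ell+(c_1-(m-2)\ell)=n+\ell$, so $\Phi(B)$ is a composition of $n+\ell$ into $m$ parts, each at least $\ell+1$.

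I would then verify that $\Phi$ is a bijection by exhibiting its inverse. Given a composition $(e_1,\ldots,e_m)$ of $n+\ell$ with each $e_i\geqslant \ell+1$, recover $c_i = e_{i-1}-\ell\geqslant 1$ for $2\leqslant i\leqslant m$ and $c_1 = e_m+(m-2)\ell\geqslant \ell(m-1)+1$, then rebuild $B$ via partial sums $b_1 = c_1$ and $b_i = b_{i-1}+c_i$. The defining inequalities correspond in both directions, so $\Phi$ is well-defined and invertible; the degenerate case $m=1$, where the singleton $B=\{n\}$ maps to the single part $n+\ell$, is checked separately. This yields $|\mathcal{B}_{n,\ell,m}| = c(n+\ell,\ell+1,m)$.

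The final claim $|\mathcal{B}_{n,\ell}| = c(n+\ell,\ell+1)$ then follows immediately by summing the first identity over all $m$, since every set in $\mathcal{B}_{n,\ell}$ has a well-defined cardinality and every composition is classified by its number of parts. I expect the main obstacle to be discovering the shift-and-rotate itself, namely the realization that one should lift all the gaps by $\ell$ and move the minimum into the final coordinate, which is exactly what converts the lone lower bound on $\min B$ into a uniform lower bound of $\ell+1$ on every part; once this map is written down, both well-definedness and invertibility reduce to the matching of the two inequalities.
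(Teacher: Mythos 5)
Your bijection is correct, but it is a genuinely different argument from the paper's. The paper proves the identity by computing both sides as the same binomial coefficient: it counts $c(n+\ell,\ell+1,m)=\binom{n+\ell-\ell m-1}{m-1}$ via stars-and-bars, counts $|\mathcal{B}_{n,\ell,m}|=\sum_{i=\ell m-\ell+1}^{n-1}\binom{n-i-1}{m-2}$ by conditioning on the minimum of the set, and then collapses the sum with the hockey-stick identity (after a side discussion of exactly when both quantities are nonzero, so that the hockey-stick step applies). Your shift-and-rotate map $\Phi(B)=(c_2+\ell,\ldots,c_m+\ell,\,c_1-(m-2)\ell)$ instead matches sets to compositions directly: the gap encoding, the sum computation $(n-c_1)+(m-1)\ell+(c_1-(m-2)\ell)=n+\ell$, and the equivalence of $c_1\geqslant\ell(m-1)+1$ with the last part being $\geqslant\ell+1$ all check out, the explicit inverse makes bijectivity immediate, and the degenerate case $m=1$ (where $c_1-(m-2)\ell=n+\ell$, consistent with your formula) is handled. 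What your approach buys is a self-contained, combinatorially transparent proof that needs no external identities and no nonvacuity case analysis, and it explains \emph{why} the two quantities agree rather than verifying that they both equal $\binom{n+\ell-\ell m-1}{m-1}$; what the paper's approach buys is brevity given standard tools, and it produces the closed-form binomial expression for $|\mathcal{B}_{n,\ell,m}|$ as a by-product, which your bijection does not directly yield.
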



\section{The $\ell$-strong Schreier sets and partitions}

We set up some notation to state a key lemma for the proof of Theorem \ref{m1}. Choose $n\geqslant 1$ and $\ell\geqslant 0$. 
Let $\mathcal{E}_{n,\ell, k}$ consist of partitions in $\mathcal{E}_{n, \ell}$ with exactly $k$ parts. For $k\geqslant 2$ and $q\geqslant 0$, let $\mathcal{E}_{n,\ell, k,q}$ be the collection of all partitions in $\mathcal{E}_{n,\ell, k}$, where the difference between the largest and the second largest parts is $q$.

Define the collection $\mathcal{F}_{n,\ell}$ to consist of sets of positive integers $A = \{a_1, \ldots, a_p, n+1\}$ such that 
\begin{enumerate}
\item[i)] $p\geqslant 2$ and $a_1 < \cdots < a_p < n+1$, 
\item[ii)] $A$ is sparse and $\ell$-strong Schreier, and 
\item [iii)] $n+1+a_{p-1} = 2a_p$.
\end{enumerate}
Note that $\mathcal{F}_{n,\ell} = \emptyset$ for all $n\leqslant 2\ell+1$. 
The first $n$ for which $\mathcal{F}_{n,\ell}$ is nonempty is $2\ell+2$; in fact, $\mathcal{F}_{2\ell+2, \ell} = \{\{2\ell+1, 2\ell+2, 2\ell+3\}\}$. 

Let $\mathcal{F}_{n,\ell,k}$ be the collection of sets in $\mathcal{F}_{n,\ell}$ with exactly $k$ elements. Finally, for $k, q\geqslant 1$, let $\mathcal{F}_{n,\ell, k,q}$ consist of sets in $\mathcal{F}_{n,\ell, k}$ with the smallest element equal to $q$.

\begin{rek}\normalfont
From the definition of $\mathcal{E}_{n,\ell+1}$, for $\ell+1\leqslant n\leqslant 2\ell+1$, we have $|\mathcal{E}_{n,\ell+1}| = 1$, as the only permissible partition of $n$ is itself. When $n = 2\ell+2$, we have two partitions in $\mathcal{E}_{2\ell+2, \ell+1}$, which are $2\ell+2$ and $(\ell+1) + (\ell+1)$. This, along with the discussion after the definition of $\mathcal{F}_{n,\ell}$, makes us suspect that 
$|\mathcal{F}_{n,\ell}|+1 = |\mathcal{E}_{n,\ell+1}|$ for $n\geqslant \ell+1$. Our first goal in this section is to establish this identity (see Corollary \ref{c2}).  
\end{rek}

\begin{lem}\label{l1}
For $n\geqslant \ell+1$, $k\geqslant 2$, and $q\geqslant 1$, we have
\begin{equation}\label{e39}|\mathcal{F}_{n, \ell, k+1, \ell k+q}| \ =\ |\mathcal{E}_{n, \ell+1, k, q-1}|.\end{equation}
\end{lem}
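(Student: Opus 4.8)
The plan is to prove \eqref{e39} by exhibiting an explicit bijection between $\mathcal{F}_{n,\ell,k+1,\ell k+q}$ and $\mathcal{E}_{n,\ell+1,k,q-1}$, encoding each set by its sequence of consecutive gaps. Fix $A=\{a_1<\cdots<a_k<n+1\}\in\mathcal{F}_{n,\ell,k+1,\ell k+q}$; this set has $k+1$ elements and, by the defining condition of $\mathcal{F}_{n,\ell,k+1,\ell k+q}$, smallest element $a_1=\ell k+q$. Record its $k$ gaps by $d_i:=a_{i+1}-a_i$ for $1\leqslant i\leqslant k-1$ and $d_k:=(n+1)-a_k$. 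Two structural facts will drive the whole argument: sparseness says exactly that $d_1\leqslant d_2\leqslant\cdots\leqslant d_k$, while condition (iii) (namely $n+1+a_{k-1}=2a_k$) says precisely that $d_{k-1}=d_k$.

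First I would define the forward map $\Phi(A)=(\lambda_1\geqslant\cdots\geqslant\lambda_k)$ by reversing the gaps, translating each by the single constant $\ell$, and loading the surplus $q-1$ entirely onto the top part:
\[
\lambda_1:=d_k+\ell+(q-1),\qquad \lambda_i:=d_{k+1-i}+\ell\quad(2\leqslant i\leqslant k).
\]
Because the $d_j$ are non-decreasing, $\lambda_2\geqslant\cdots\geqslant\lambda_k$ is immediate; and since $d_k=d_{k-1}$ with $q-1\geqslant0$, also $\lambda_1\geqslant\lambda_2$, so $\Phi(A)$ is weakly decreasing. Each gap is at least $1$ (the $a_i$ strictly increase), so every part is at least $\ell+1$. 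The difference of the two largest parts is $\lambda_1-\lambda_2=(d_k-d_{k-1})+(q-1)=q-1$, again using (iii). Finally, summing gives $\sum_{i=1}^k\lambda_i=(q-1)+\ell k+\sum_{j=1}^k d_j$, and since $\sum_{j=1}^k d_j=(n+1)-a_1=(n+1)-(\ell k+q)$, this collapses to $n$. Hence $\Phi(A)\in\mathcal{E}_{n,\ell+1,k,q-1}$.

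Next I would invert $\Phi$, which I expect to be routine once the normalization above is pinned down. Given $\lambda_1\geqslant\cdots\geqslant\lambda_k$ in $\mathcal{E}_{n,\ell+1,k,q-1}$, set $d_j:=\lambda_{k+1-j}-\ell$ for $1\leqslant j\leqslant k-1$ and $d_k:=\lambda_2-\ell$, then reconstruct $A$ from $a_1:=\ell k+q$ via $a_{i+1}:=a_i+d_i$, with the last gap $d_k$ chosen to reach the top element. Each $d_j\geqslant1$ (parts are $\geqslant\ell+1$) makes the $a_i$ strictly increasing; the weakly decreasing order of $\lambda$ yields $d_1\leqslant\cdots\leqslant d_{k-1}=d_k$, which is exactly sparseness together with (iii); the minimum $a_1=\ell k+q\geqslant\ell(k+1)-\ell+1$ is the $\ell$-strong Schreier condition; and the same telescoping sum as before, now run in reverse and using $\lambda_1-\lambda_2=q-1$, shows the final element is $a_k+d_k=n+1$. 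Thus $\Phi$ and this reconstruction are mutually inverse, proving \eqref{e39}.

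The only genuinely delicate point—and the step I would flag as the crux—is discovering the correct affine normalization: reversing the gaps, translating by the single constant $\ell$ (forced by the sum computation, as any other shift fails to return total $n$), and above all recognizing that the surplus $q-1$ must be placed entirely on the largest part. Condition (iii), which equates the top two gaps, is exactly what guarantees that without this surplus the two largest parts would coincide, so that loading $q-1$ on top produces a controlled difference of \emph{precisely} $q-1$. Once this normalization is seen, every remaining verification reduces to a short telescoping identity.
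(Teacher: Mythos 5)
Your bijection is exactly the one the paper uses: encode a set by its multiset of consecutive gaps (non-decreasing by sparseness, with the top two gaps equal by condition (iii)), shift every gap by $\ell$, and add the surplus $q-1$ to the largest entry, with the same telescoping sum and the same explicit inverse built from $a_1=\ell k+q$. The proof is correct and takes essentially the same approach as the paper.
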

\begin{proof}
Given a nonempty set $A$ of natural numbers, we let $$\mathcal{R}(A)  \ =\ (A\backslash \{\max A\})\cup \{\max A + q-1\}.$$
We define a bijective map $\mathcal{S}: \mathcal{F}_{n,\ell, k+1, \ell k + q}\rightarrow \mathcal{E}_{n, \ell+1, k, q-1}$ as 
$$\mathcal{S}(F) \ =\ \mathcal{R}(\mathcal{D}(F)+\ell).$$
We show that $\mathcal{S}$ is well-defined and bijective.
\begin{enumerate}
\item[a)] Let $F\in \mathcal{F}_{n,\ell, k+1, \ell k + q}$. The sum of $k$ elements in $\mathcal{D}(F)$ is $n+1-(\ell k+q)$. Hence, the sum of $k$ elements in $\mathcal{S}(F)$ is $n$. By Condition iii) in the definition of $\mathcal{F}_{n,\ell}$, we know that the difference of the two largest elements in $\mathcal{S}(F)$ is $q-1$. Hence, $\mathcal{S}(F)\in \mathcal{E}_{n, \ell+1, k, q-1}$.
\item[b)] To show that $\mathcal{S}$ is injective, it suffices to verify that given $F_1\neq F_2\in \mathcal{F}_{n,\ell, k+1, \ell k + q}$, we have $\mathcal{D}(F_1)\neq \mathcal{D}(F_2)$. For $i\in \{1,2\}$, write $F_i = \{f_{i,1},\ldots, f_{i, k+1}\}$. Let $t\geqslant 2$ be the smallest such that $f_{1,t}\neq f_{2,t}$. Since $F_i$ is sparse, 
$$\mathcal{D}(F_i)\ =\ f_{i,2}-f_{i,1}\ \leqslant\ \cdots\ \leqslant \ f_{i,t}-f_{i,t-1}\ \leqslant\ \cdots, \quad i = 1,2.$$
It follows that $t$ is the smallest such that $f_{1,t} - f_{1,t-1}\neq f_{2,t}-f_{2,t-1}$. The above arrangement of numbers in $\mathcal{D}(F_i)$ in increasing order guarantees that $\mathcal{D}(F_1)\neq \mathcal{D}(F_2)$. 
\item[c)] Let $\ell+1\leqslant n_1\leqslant\cdots\leqslant n_k$ be such that $\sum_{i=1}^k n_i = n$ and $n_k-n_{k-1} = q-1$. Define $m_i = n_i-\ell$ for $i\leqslant k-1$ and $m_k = n_k-\ell-(q-1)$. Consider the set
$$F \ =\ \left\{\ell k + q, \ell k + q + m_1, \ldots, \ell k + q + \sum_{i=1}^k m_i\right\}.$$
Observe that
\begin{align*}
\ell k + q +  \sum_{i=1}^k m_i&\ =\ \ell k + q + \sum_{i=1}^{k-1} m_i + m_k\\
    &\ =\  \ell k + q + \sum_{i=1}^{k-1}n_i - \ell(k-1) + n_k-\ell - (q-1)\\
    &\ =\ n+1.
\end{align*}
Hence, $F\in  \mathcal{F}_{n,\ell, k+1, \ell k + q}$. It is easy to see that $\mathcal{S}(F) = n_1\leqslant \cdots \leqslant n_k$. 
\end{enumerate}
We have completed the proof. 
\end{proof}

\begin{cor}\label{c1}
For $n\geqslant \ell+1$ and $k\geqslant 2$, we have $|\mathcal{E}_{n, \ell+1, k}| = |\mathcal{F}_{n,\ell, k+1}|$.
\end{cor}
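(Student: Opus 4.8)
The plan is to obtain Corollary \ref{c1} by summing the refined identity of Lemma \ref{l1} over the grading parameter $q$, after checking that the two gradings range over matching index sets. First I would partition $\mathcal{F}_{n,\ell,k+1}$ according to its smallest element. Any $A\in\mathcal{F}_{n,\ell,k+1}$ has exactly $k+1$ elements, so the $\ell$-strong Schreier condition forces
$$\min A\ \geqslant\ \ell|A|-\ell+1\ =\ \ell(k+1)-\ell+1\ =\ \ell k+1.$$
Hence, writing $\min A=\ell k+q$, we necessarily have $q\geqslant 1$, and $\mathcal{F}_{n,\ell,k+1,j}=\emptyset$ whenever $j<\ell k+1$. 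This gives the disjoint decomposition
$$\mathcal{F}_{n,\ell,k+1}\ =\ \bigsqcup_{q\geqslant 1}\mathcal{F}_{n,\ell,k+1,\ell k+q},$$
a finite union since every smallest element is bounded above by $n$.

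Next I would grade the partition side by the gap between the two largest parts. By definition $\mathcal{E}_{n,\ell+1,k}$ splits as the disjoint union $\bigsqcup_{q'\geqslant 0}\mathcal{E}_{n,\ell+1,k,q'}$, where $q'\geqslant 0$ records the difference between the largest and the second largest part. Re-indexing by $q=q'+1\geqslant 1$ aligns this decomposition with the one above:
$$\mathcal{E}_{n,\ell+1,k}\ =\ \bigsqcup_{q\geqslant 1}\mathcal{E}_{n,\ell+1,k,q-1}.$$

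With both sides now partitioned over the same index set $\{q\geqslant 1\}$, I would apply Lemma \ref{l1} term by term: for each $q\geqslant 1$ it yields $|\mathcal{F}_{n,\ell,k+1,\ell k+q}|=|\mathcal{E}_{n,\ell+1,k,q-1}|$. Summing over $q$ and invoking the two decompositions gives
$$|\mathcal{F}_{n,\ell,k+1}|\ =\ \sum_{q\geqslant 1}|\mathcal{F}_{n,\ell,k+1,\ell k+q}|\ =\ \sum_{q\geqslant 1}|\mathcal{E}_{n,\ell+1,k,q-1}|\ =\ |\mathcal{E}_{n,\ell+1,k}|,$$
which is the claimed identity.

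The only genuinely nonroutine point is matching the ranges of the two grading parameters, and this is exactly where the hypotheses enter: the lower bound $q\geqslant 1$ on the Schreier side comes from the $\ell$-strong Schreier inequality applied to a set of cardinality $k+1$, so that no smaller minimum can occur, while $q'\geqslant 0$ on the partition side is forced because the difference of two parts is nonnegative. Once the shift $q\mapsto q-1$ is seen to biject these ranges, the corollary is immediate from Lemma \ref{l1}, with no further combinatorial work required.
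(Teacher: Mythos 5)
Your proof is correct and follows essentially the same route as the paper: decompose $\mathcal{F}_{n,\ell,k+1}$ by smallest element and $\mathcal{E}_{n,\ell+1,k}$ by the gap between the two largest parts, match the index ranges via the shift $q\mapsto q-1$, and sum Lemma \ref{l1} over $q$. The paper states this in one line; you additionally spell out why the $\ell$-strong Schreier condition forces $\min A\geqslant \ell k+1$, which is a worthwhile clarification but not a different argument.
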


\begin{proof}
Simply observe that
$$|\mathcal{E}_{n,\ell+1, k}| \ =\ \sum_{q=1}^\infty |\mathcal{E}_{n,\ell+1, k,q-1}|\mbox{ and }|\mathcal{F}_{n,\ell, k+1}|\ =\ \sum_{q=1}^\infty|\mathcal{F}_{n, \ell, k+1, \ell k+q}|,$$
and use Lemma \ref{l1}.
\end{proof}

\begin{cor}\label{c2}
For $n\geqslant \ell+1$, we have $|\mathcal{E}_{n,\ell+1}| = |\mathcal{F}_{n,\ell}|+1$.
\end{cor}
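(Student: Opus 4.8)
The plan is to deduce this statement from Corollary~\ref{c1} by summing over the number of parts on the partition side and the number of elements on the set side, after isolating the two relevant boundary contributions. The key observation is that Corollary~\ref{c1} already matches $\mathcal{E}_{n,\ell+1,k}$ with $\mathcal{F}_{n,\ell,k+1}$ for every $k\geqslant 2$, so a clean summation identity should fall out once we account for exactly the configurations Corollary~\ref{c1} does \emph{not} see: partitions of $n$ consisting of a single part, and the fact that every set in $\mathcal{F}_{n,\ell}$ is required to have $p\geqslant 2$, hence at least three elements.

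First I would split $|\mathcal{E}_{n,\ell+1}|$ according to the number of parts, writing
\[
|\mathcal{E}_{n,\ell+1}| \ =\ |\mathcal{E}_{n,\ell+1,1}| + \sum_{k\geqslant 2} |\mathcal{E}_{n,\ell+1,k}|.
\]
Since $n\geqslant \ell+1$, the unique one-part partition is $n$ itself and it is permissible, so $|\mathcal{E}_{n,\ell+1,1}| = 1$. Applying Corollary~\ref{c1} to each term with $k\geqslant 2$ then converts the remaining sum into $\sum_{k\geqslant 2} |\mathcal{F}_{n,\ell,k+1}|$.

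Next I would re-index the sum by $j = k+1$, turning it into $\sum_{j\geqslant 3} |\mathcal{F}_{n,\ell,j}|$, and then argue that this already equals $|\mathcal{F}_{n,\ell}|$. This is precisely where the size restriction built into the definition of $\mathcal{F}_{n,\ell}$ does the work: every set $A = \{a_1,\ldots,a_p,n+1\}\in\mathcal{F}_{n,\ell}$ satisfies $p\geqslant 2$ and therefore $|A|\geqslant 3$, so $\mathcal{F}_{n,\ell}$ is partitioned exactly by the collections $\mathcal{F}_{n,\ell,j}$ with $j\geqslant 3$. Hence $\sum_{j\geqslant 3}|\mathcal{F}_{n,\ell,j}| = |\mathcal{F}_{n,\ell}|$, and combining this with the previous display yields $|\mathcal{E}_{n,\ell+1}| = 1 + |\mathcal{F}_{n,\ell}|$.

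All sums here are finite (a partition of $n$ into parts $\geqslant \ell+1$ has at most $n/(\ell+1)$ parts, and correspondingly the sets in $\mathcal{F}_{n,\ell}$ have bounded size), so no convergence subtlety arises. I do not expect a genuine obstacle in this corollary, as it is essentially an aggregation of Corollary~\ref{c1}; the only point requiring care is the bookkeeping of the two boundary effects, namely the ``$+1$'' coming from the single-part partition that Corollary~\ref{c1} omits and the matching fact that $\mathcal{F}_{n,\ell}$ begins at size three rather than size two. As a sanity check, for $\ell+1\leqslant n\leqslant 2\ell+1$ both sides reduce to $1 = 0+1$, consistent with $|\mathcal{E}_{n,\ell+1}| = 1$ and $\mathcal{F}_{n,\ell} = \emptyset$.
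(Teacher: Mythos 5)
Your argument is correct and is essentially identical to the paper's proof: both decompose $|\mathcal{E}_{n,\ell+1}|$ by number of parts, peel off the single-part partition (contributing the $+1$ since $n\geqslant \ell+1$), apply Corollary \ref{c1} termwise for $k\geqslant 2$, re-index, and use the fact that every set in $\mathcal{F}_{n,\ell}$ has at least three elements to identify the remaining sum with $|\mathcal{F}_{n,\ell}|$. No issues.
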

\begin{proof}
By Corollary \ref{c1}, 
\begin{align*}
|\mathcal{E}_{n,\ell+1}| \ =\ \sum_{k=1}^\infty|\mathcal{E}_{n,\ell+1, k}|&\ =\ |\mathcal{E}_{n,\ell+1, 1}| + \sum_{k=2}^\infty |\mathcal{E}_{n,\ell+1, k}|\ =\ |\mathcal{E}_{n,\ell+1, 1}| + \sum_{k=2}^\infty |\mathcal{F}_{n,\ell, k+1}|\\
&\ =\ |\mathcal{E}_{n,\ell+1, 1}| + \sum_{k=3}^\infty |\mathcal{F}_{n,\ell, k}|\ =\ 1 + |\mathcal{F}_{n,\ell}|,
\end{align*}
because sets in $\mathcal{F}_{n,\ell}$ have at least three elements. Note also that $|\mathcal{E}_{n,\ell+1, 1}| = 1$, because $n\geqslant \ell+1$.
\end{proof}

\begin{proof}[Proof of Theorem \ref{m1}, Identity \eqref{e31}]
Pick $n\geqslant 1$ and $\ell\geqslant 0$. We first consider the case $n\geqslant \ell+1$.
To evaluate $|\mathcal{A}_{n+1,\ell}| - |\mathcal{A}_{n,\ell}|$, we define an injective map $f: \mathcal{A}_{n,\ell}\rightarrow \mathcal{A}_{n+1, \ell}$ as 
$$f(A)\ :=\ (A\backslash \{n\})\cup \{n+1\}.$$
It is easy to see that $f(\mathcal{A}_{n,\ell}) = \{\{n+1\}\}\cup \mathcal{C}_n\cup \mathcal{D}_n$, where 
\begin{align*}
    \mathcal{C}_{n}&\ :=\  \{\{m,n+1\}\,:\, \ell+1\leqslant m\leqslant n-1\}, \mbox{ and }\\
    \mathcal{D}_n&\ :=\ \{\{a_1, \ldots, a_p, n+1\}\,:\, p\geqslant 2, a_1 < \cdots < a_p < n \mbox{ is }\ell\mbox{-strong Schreier, sparse}\}.
\end{align*}
Hence,
$$|\mathcal{A}_{n+1, \ell}|-|\mathcal{A}_{n,\ell}|\ =\ |\mathcal{A}_{n+1,\ell}| - |f(\mathcal{A}_{n,\ell})|\ =\ |\mathcal{A}_{n+1, \ell}\backslash (\{\{n+1\}\}\cup \mathcal{C}_n\cup \mathcal{D}_n)|.$$
From the definition of $\mathcal{A}_{n+1, \ell}$, $\mathcal{C}_n$, and $\mathcal{D}_n$, we know that
$\mathcal{A}_{n+1, \ell}\backslash (\{\{n+1\}\}\cup \mathcal{C}_n\cup \mathcal{D}_n)$ is the collection consisting of $\{n, n+1\}$ and sets of the form $\{a_1, \ldots, a_p, n+1\}$ such that
\begin{enumerate}
\item[i)]  $p\geqslant 2$ and $a_1 < \cdots < a_p < n+1$,
\item[ii)] $\{a_1, \ldots, a_p, n+1\}$ is sparse and $\ell$-strong Schreier, and 
\item[iii)] if $a_p < n$, then $\{a_1, \ldots, a_p, n\}$ is not sparse.
\end{enumerate}
\begin{claim}\label{cl1}
Let $A = \{a_1, \ldots, a_p, n+1\}$ satisfy Conditions i) and ii). Then $A$ satisfies Condition iii) if and only if $n+1+a_{p-1} = 2a_p$.
\end{claim}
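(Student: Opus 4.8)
The plan is to convert both Condition iii) and the equation $n+1+a_{p-1}=2a_p$ into statements about the last two consecutive differences of $A$, and then use the integrality of $n$ to pin them together. Writing the final two gaps of $A$ as $a_p-a_{p-1}$ and $(n+1)-a_p$, the sparseness of $A$ applied to $a_{p-1}<a_p<n+1$ (valid since $p\geqslant 2$, so $A$ has at least three elements) reads
$$(n+1)-a_p\ \geqslant\ a_p-a_{p-1}, \quad\mbox{equivalently}\quad n+1+a_{p-1}\ \geqslant\ 2a_p.$$
Thus the target equation $n+1+a_{p-1}=2a_p$ is equivalent to the reverse inequality $n+1+a_{p-1}\leqslant 2a_p$, and the whole claim reduces to showing this reverse inequality is equivalent to Condition iii).

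First I would dispose of the boundary case $a_p=n$, where Condition iii) holds vacuously. Here the last gap of $A$ equals $1$, so sparseness forces $1\geqslant a_p-a_{p-1}=n-a_{p-1}$, giving $a_{p-1}\geqslant n-1$; combined with $a_{p-1}<a_p=n$ this yields $a_{p-1}=n-1$ and hence $n+1+a_{p-1}=2n=2a_p$. So both sides of the desired equivalence hold automatically.

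For the main case $a_p\leqslant n-1$, I would compare $A$ with $A':=\{a_1,\ldots,a_p,n\}$. These two sets share every consecutive difference except the last, which drops from $(n+1)-a_p$ to $n-a_p$; since the differences among $a_1,\ldots,a_p$ are already non-decreasing (as $A$ is sparse), $A'$ fails to be sparse exactly when its final gap is too small, that is, when $n-a_p<a_p-a_{p-1}$. Rewriting, $A'$ is not sparse iff $n+a_{p-1}<2a_p$, and because all quantities are integers this is iff $n+1+a_{p-1}\leqslant 2a_p$. Combining with the sparseness inequality for $A$ from the first paragraph gives $n+1+a_{p-1}=2a_p$, and the converse direction is read off the same chain of equivalences.

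The only point requiring care is the boundary case $a_p=n$: here Condition iii) is vacuous, yet the equation must still be verified, and the resolution is that sparseness of $A$ already forces $a_{p-1}=n-1$. Away from this boundary the argument is a one-line reformulation of the sparse condition as an inequality between two gaps, with integrality upgrading the strict comparison to the claimed equality; I do not expect any genuine obstacle.
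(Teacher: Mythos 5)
Your proof is correct and follows essentially the same route as the paper's: both arguments reduce the claim to comparing the last two gaps, using sparseness of $A$ for the inequality $n+1+a_{p-1}\geqslant 2a_p$ and non-sparseness of $\{a_1,\ldots,a_p,n\}$ for the strict reverse inequality, with integrality closing the gap of $1$. The only cosmetic difference is that you organize this as a chain of equivalences with an explicit boundary case $a_p=n$, whereas the paper proves the two implications separately and absorbs that boundary case into its proof by contradiction.
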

\begin{proof}

Suppose that $n + 1+ a_{p-1} = 2a_p$; equivalently, $n-a_p = a_p-a_{p-1}-1$. It follows that if $a_p < n$, then $\{a_1, \ldots, a_p, n\}$ is not sparse.

Conversely, assume that Condition iii) holds. Since $A$ is sparse, we know that
\begin{equation*}(n+1) - a_p \ \geqslant \ a_p-a_{p-1}.\end{equation*}
Suppose, for a contradiction, that $(n+1) - a_p > a_p-a_{p-1}$; equivalently, 
$$n\ \geqslant\ a_p+(a_p-a_{p-1})\ >\ a_p.$$
According to Condition iii), $\{a_1, \ldots, a_p, n\}$ is not sparse, which, along with the fact that $A$ is sparse, implies 
\begin{equation*}n-a_p \ <\ a_p - a_{p-1};\end{equation*}
hence, $a_p-a_{p-1}$ lies strictly between two consecutive integers $n-a_p$ and $n+1-a_p$, a contradiction. Therefore, $n+1  + a_{p-1} \ = \ 2a_p$.
\end{proof}

Claim \ref{cl1} states that we can replace Condition iii) by the condition $n + a_{p-1} = 2a_p$. 
Considering the definition of $\mathcal{F}_{n,\ell}$, we have shown that
$$\mathcal{A}_{n+1, \ell}\backslash (\{\{n+1\}\}\cup \mathcal{C}_n\cup \mathcal{D}_n) \ =\  \mathcal{F}_{n,\ell} \cup \{\{n, n+1\}\}.$$
Therefore, $|\mathcal{A}_{n+1, \ell}| - |\mathcal{A}_{n,\ell}| = |\mathcal{F}_{n,\ell}| + 1$. Corollary \ref{c2} then gives
$$|\mathcal{A}_{n+1, \ell}| - |\mathcal{A}_{n, \ell}|\ =\ |\mathcal{E}_{n,\ell+1}|, \quad n\geqslant \ell+1.$$

We consider $1\leqslant n\leqslant \ell$.
When $n\leqslant \ell$, $|\mathcal{E}_{n,\ell+1}| = 0$. On the other hand, $\mathcal{A}_{n, \ell} = \{\{n\}\}$ for all $n\leqslant \ell+1$. Indeed, if there exists $A\in \mathcal{A}_{n,\ell}$ with $|A|\geqslant 2$, then 
$$\min A\ \geqslant\ \ell|A|-\ell+1 \ \geqslant\ \ell+1,$$
so $\max A\geqslant \min A + 1 = \ell+2$. However, $n\leqslant \ell+1$; hence, no such set $A$ exists. Therefore, similar to $|\mathcal{E}_{n,\ell+1}|$, $|\mathcal{A}_{n+1, \ell}|-|\mathcal{A}_{n,\ell}| = 0$ for all $n\leqslant \ell$. This completes our proof that
$$|\mathcal{A}_{n+1, \ell}| - |\mathcal{A}_{n, \ell}|\ =\ |\mathcal{E}_{n,\ell+1}|, \quad n\geqslant 1,$$
which implies \eqref{e31}.
\end{proof}

The proof of Identity \eqref{e32} is similar to the proof of Identity \eqref{e31} with obvious modifications. Specifically, we change the third condition on sets in the family $\mathcal{F}_{n,\ell}$ to 
$$n+a_{p-1} \ =\ 2a_p.$$
The change leads to the definition of $\mathcal{F}^s_{n,\ell}$ and $\mathcal{F}^s_{n, \ell, k, q}$, which are the counterparts of $\mathcal{F}_{n,\ell}$ and $\mathcal{F}_{n,\ell, k, q}$, respectively. The counterpart of \eqref{e39} is 
$$|\mathcal{F}^s_{n,\ell, k+1, \ell k+q}| \ =\ |\mathcal{E}^d_{n, \ell, k, q}|.$$
Note that the index $q-1$ is changed to $q$.
For conciseness, we leave the details for interested readers.

\begin{proof}[Proof of Theorem \ref{c3}]
Fix $\ell\geqslant 1$. We prove \eqref{e40}. 
The generating function for the sequence $(|\mathcal{E}_{n,\ell+1}|)_{n=0}^\infty$ is 
$\Psi(x) = \prod_{i=\ell+1}^\infty (1-x^i)^{-1}$.
Meanwhile, for $n\geqslant 2$, if we add the part $1$ to a partition in $\mathcal{G}_{n-1, \ell}$, we have a partition in $\mathcal{G}_{n, \ell}$. This map is clearly injective. Hence, for each $n\geqslant 2$, $|\mathcal{G}_{n, \ell}|-|\mathcal{G}_{n-1, \ell}|$ counts the number of partitions of $n$ that contain no parts in $\{1, \ldots, \ell\}$, the generating function for which is again $\Psi(x)$. Therefore, $|\mathcal{G}_{n,\ell}| - |\mathcal{G}_{n-1, \ell}| = |\mathcal{E}_{n,\ell+1}|$ for all $n\geqslant 2$. When $n = 1$, we also have $|\mathcal{G}_{1,\ell}| - |\mathcal{G}_{0, \ell}| = |\mathcal{E}_{1,\ell+1}| = 0$, because $\ell\geqslant 1$. Hence, $|\mathcal{G}_{n,\ell}| - |\mathcal{G}_{n-1, \ell}| = |\mathcal{E}_{n,\ell+1}|$ for  $n\in \mathbb{N}$. By \eqref{e31}, 
$|\mathcal{A}_{n,\ell}| = |\mathcal{G}_{n-1,\ell}|$, as desired.

Next, we prove \eqref{e41}. The generating function for $(|\mathcal{E}^d_{n,\ell+1}|)_{n=0}^\infty$ is 
$$
\Psi(x) \ =\ \prod_{i=\ell+1}^\infty(1+x^{i})\ =\ \prod_{i=\ell+1}^\infty\frac{1-x^{2i}}{1-x^{i}}\ =\ \prod_{i=\ell+1}^{2\ell}\frac{1}{1-x^i}\prod_{j=\ell}^\infty\frac{1}{1-x^{2j+1}}.
$$
We establish the generating function for the sequence $(|\mathcal{H}_{n,\ell}|-|\mathcal{H}_{n-1,\ell}|)_{n\geqslant 1}$.
As above, $|\mathcal{H}_{n,\ell}|-|\mathcal{H}_{n-1,\ell}|$ counts the number of partitions of $n$ that neither contain a part in $\{1,\ldots, \ell\}$ nor contain an even part greater than $2\ell$, the generating function for which is
\begin{align*}
\Theta(x) &\ =\ \prod_{i=1}^{\ell}(1+x^{\ell+i}+x^{2(\ell+i)}+\cdots)\prod_{j=\ell}^{\infty}(1+x^{2j+1}+x^{2(2j+1)}+\cdots)\\
&\ =\ \prod_{i=\ell+1}^{2\ell}\frac{1}{1-x^i}\prod_{j=\ell}^\infty\frac{1}{1-x^{2j+1}}\ =\ \Psi(x).
\end{align*}
Therefore, $|\mathcal{H}_{n,\ell}|-|\mathcal{H}_{n-1,\ell}| = |\mathcal{E}^d_{n,\ell+1}|$ for $n\geqslant 2$. When $n = 1$, 
$|\mathcal{H}_{1,\ell}| - |\mathcal{H}_{0,\ell}|  = |\mathcal{E}^d_{1,\ell+1}| = 0$,
because $\ell\geqslant 1$. We have shown
$|\mathcal{H}_{n,\ell}|-|\mathcal{H}_{n-1,\ell}|  = |\mathcal{E}^d_{n,\ell+1}|$ for $n\in \mathbb{N}$.
Use \eqref{e32} to obtain $|\mathcal{H}_{{n-1},\ell}| = |\mathcal{A}_{n,\ell}^s|$, as desired. 
\end{proof}


\section{The $\ell$-strong Schreier sets and compositions}

Our proof of Theorem \ref{m2} invokes two well-known results: the star-and-bar problem and the hockey-stick identity. We refer the readers to \cite[Lemma 2.1]{KKMW} for the former and to \cite[Theorem 1.2.3 item (5)]{W} for the latter. 

\begin{proof}[Proof of Theorem \ref{m2}]
Observe that $\mathcal{B}_{n,\ell, 1} = \{n\}$ and $c(n+\ell, \ell+1, 1) = 1$, so $|\mathcal{B}_{n,\ell,1}| = c(n+\ell, \ell+1, 1)$. Assume that $m\geqslant 2$. 
According to the star-and-bar problem, 
$$c(n+\ell, \ell+1, m)\ =\ \binom{n+\ell-(\ell+1)m+(m-1)}{m-1}\ =\ \binom{n+\ell-\ell m-1}{m-1}.$$
On the other hand, for $B\in \mathcal{B}_{n,\ell,m}$, the $\ell$-strong Schreier property implies that $\min B\geqslant \ell m-\ell  + 1$. A set in $\mathcal{B}_{n,\ell,m}$ is formed by first choosing its minimum $i\geqslant \ell m-\ell+1$ then choosing $m-2$ number(s) strictly between $i$ and $n$; hence, 
$$|\mathcal{B}_{n,\ell, m}|\ =\ \sum_{i=\ell m-\ell+1}^{n-1}\binom{n-i-1}{m-2}\ =\ \sum_{i = 0}^{n+\ell-\ell m-2}\binom{i}{m-2}.$$
We, therefore, have $|\mathcal{B}_{n,\ell, m}|\neq 0$ if and only if $n-i-1\geqslant m-2$ for some $i\geqslant \ell m-\ell+1$. In other words, $|\mathcal{B}_{n,\ell, m}|\neq 0$ if and only if $n+\ell\geqslant (\ell+1)m$. Similarly, $c(n+\ell, \ell+1, m)\neq 0$ if and only if $n+\ell\geqslant (\ell+1)m$. Therefore, it suffices to prove that $|\mathcal{B}_{n,\ell, m}| = c(n+\ell, \ell+1, m)$ when $n+\ell\geqslant (\ell+1)m$. In this case, applying the hockey-stick identity, we obtain 
$$|\mathcal{B}_{n,\ell, m}|\ =\ \sum_{i = 0}^{n+\ell-\ell m-2}\binom{i}{m-2}\ =\ \binom{n+\ell-\ell m - 1}{m-1}\ =\ c(n+\ell, \ell+1, m).$$
This completes our proof. 
\end{proof}

\section{Appendix}
Below are tables for initial values of $|\mathcal{A}_{n,\ell}|, |\mathcal{A}^s_{n,\ell}|, |\mathcal{E}_{n,\ell}|$, $|\mathcal{E}^d_{n,\ell}|$, $|\mathcal{G}_{n,\ell}|$, and $|\mathcal{H}_{n,\ell}|$.

\begin{tabular}{c|cccccccccccccccc}
$n$ & $1$ & $2$ & $3$ & $4$ & $5$ & $6$ & $7$ & $8$ & $9$ & $10$ & $11$ & $12$ & $13$ & $14$ & $15$ & $16$     \\
\hline
$|\mathcal{A}_{n,0}|$ & 1 & 2 & 4 & 7        & $12$ &    $19$ &   $30$ & $45$ & $67$ & $97$ & $139$ & $195$ & $272$ & $373$ & $508$ & $684$ \\
$|\mathcal{A}_{n,1}|$ & 1 & 1 & $2$ & $3$    & $5$ &   $7$&   $11$ & $15$ & $22$ & $30$ & $42$ & $56$ & $77$ & $101$ & $135$ & $176$ \\
$|\mathcal{A}_{n,2}|$ & 1 & 1 & $1$ & $2$   & $3$ &   $4$&   $6$ & $8$ & $11$ & $15$ & $20$ & $26$ & $35$ & $45$ & $58$ & $75$ \\
$|\mathcal{A}_{n,3}|$ & 1 & 1 & $1$ & $1$    & $2$ &   $3$&   $4$ & $5$ & $7$ & $9$ & $12$ & $15$ & $20$ & $25$ & $32$ & $40$ \\
\end{tabular}

\begin{center}
Table 1. Initial values of $(|\mathcal{A}_{n,\ell}|)_{n=1}^\infty$ for $0\leqslant \ell\leqslant 3$.
\end{center}

\begin{tabular}{c|ccccccccccccccccc}
$n$ & $1$ & $2$ & $3$ & $4$ & $5$ & $6$ & $7$ & $8$ & $9$ & $10$ & $11$ & $12$ & $13$ & $14$ & $15$ & $16$ & $17$   \\
\hline
$|\mathcal{A}^s_{n,0}|$ & 1 & 2 & 3 & 5        & $7$ &    $10$ &   $14$ & $19$ & $25$ & $33$ & $43$ & $55$ & $70$ & $88$ & $110$ & $137$ & $169$\\
$|\mathcal{A}^s_{n,1}|$ & 1 & 1 & $2$ & $3$    & $4$ &   $6$&   $8$ & $11$ & $14$ & $19$ & $24$ & $31$ & $39$ & $49$ & $61$ & $76$ & $93$\\
$|\mathcal{A}^s_{n,2}|$ & 1 & 1 & $1$ & $2$   & $3$ &   $4$&   $5$ & $7$ & $9$ & $12$ & $15$ & $19$ & $24$ & $30$ & $37$ & $46$ & $56$\\
$|\mathcal{A}^s_{n,3}|$  & 1 & 1 & $1$ & $1$    & $2$ &   $3$&   $4$ & $5$ & $6$ & $8$ & $10$ & $13$ & $16$ & $20$ & $24$ & $30$ & $36$\\
\end{tabular}

\begin{center}
Table 2. Initial values of $(|\mathcal{A}^s_{n,\ell}|)_{n=1}^\infty$ for $0\leqslant \ell\leqslant 3$.
\end{center}

\begin{tabular}{c|ccccccccccccccccc}
$n$ & $0$ & $1$ & $2$ & $3$ & $4$ & $5$ & $6$ & $7$ & $8$ & $9$ & $10$ & $11$ & $12$ & $13$ & $14$ & $15$ & $16$    \\
\hline
$|\mathcal{E}_{n,1}|$ & 1 & 1 & 2 & 3 & 5        & $7$ &    $11$ &   $15$ & $22$ & $30$ & $42$ & $56$ & $77$ & $101$ & $135$ & $176$ & $231$ \\
$|\mathcal{E}_{n,2}|$ & 1 & 0  & 1 & $1$ & $2$    & $2$ &   $4$&   $4$ & $7$ & $8$ & $12$ & $14$ & $21$ & $24$ & $34$ & $41$ & $55$ \\
$|\mathcal{E}_{n,3}|$ & 1 & 0 & 0 & $1$ & $1$   & $1$ &   $2$&   $2$ & $3$ & $4$ & $5$ & $6$ & $9$ & $10$ & $13$ & $17$ & $21$ \\
$|\mathcal{E}_{n,4}|$ & 1 & 0 & 0 & $0$ & $1$    & $1$ &   $1$&   $1$ & $2$ & $2$ & $3$ & $3$ & $5$ & $5$ & $7$ & $8$ & $11$ \\
\end{tabular}

\begin{center}
Table 3. Initial values of $(|\mathcal{E}_{n,\ell}|)_{n=0}^\infty$ for small $1\leqslant \ell\leqslant 4$.
\end{center}

\begin{tabular}{c|ccccccccccccccccc}
$n$ & $0$ & $1$ & $2$ & $3$ & $4$ & $5$ & $6$ & $7$ & $8$ & $9$ & $10$ & $11$ & $12$ & $13$ & $14$ & $15$ & $16$    \\
\hline
$|\mathcal{E}^d_{n,1}|$ & 1 & 1 & 1 & 2 & 2        & $3$ &    $4$ &   $5$ & $6$ & $8$ & $10$ & $12$ & $15$ & $18$ & $22$ & $27$ & $32$ \\
$|\mathcal{E}^d_{n,2}|$ & 1 & 0  & 1 & $1$ & $1$    & $2$ &   $2$&   $3$ & $3$ & $5$ & $5$ & $7$ & $8$ & $10$ & $12$ & $15$ & $17$ \\
$|\mathcal{E}^d_{n,3}|$ & 1 & 0 & 0 & $1$ & $1$   & $1$ &   $1$&   $2$ & $2$ & $3$ & $3$ & $4$ & $5$ & $6$ & $7$ & $9$ & $10$ \\
$|\mathcal{E}^d_{n,4}|$ & 1 & 0 & 0 & $0$ & $1$    & $1$ &   $1$&   $1$ & $1$ & $2$ & $2$ & $3$ & $3$ & $4$ & $4$ & $6$ & $6$ \\
\end{tabular}

\begin{center}
Table 4. Initial values of $(|\mathcal{E}^d_{n,\ell}|)_{n=0}^\infty$ for small $1\leqslant \ell\leqslant 4$.
\end{center}

\begin{tabular}{c|ccccccccccccccccc}
$n$ & $0$ & $1$ & $2$ & $3$ & $4$ & $5$ & $6$ & $7$ & $8$ & $9$ & $10$ & $11$ & $12$ & $13$ & $14$ & $15$ & $16$   \\
\hline
$|\mathcal{G}_{n,1}|$ & 1 & 1 & $2$ & $3$    & $5$ &   $7$ &   $11$ & $15$ & $22$ & $30$ & $42$ & $56$ & $77$ & $101$ & $135$ & $176$ & $231$\\
$|\mathcal{G}_{n,2}|$ & 1 & 1 & $1$ & $2$   & $3$ &   $4$&   $6$ & $8$ & $11$ & $15$ & $20$ & $26$ & $35$ & $45$ & $58$ & $75$ & $96$\\
$|\mathcal{G}_{n,3}|$ & 1 & 1 & $1$ & $1$    & $2$ &   $3$&   $4$ & $5$ & $7$ & $9$ & $12$ & $15$ & $20$ & $25$ & $32$ & $40$ & $51$\\
\end{tabular}

\begin{center}
Table 5. Initial values of $(|\mathcal{G}_{n,\ell}|)_{n=0}^\infty$ for small $1\leqslant \ell\leqslant 3$.
\end{center}

\begin{tabular}{c|ccccccccccccccccc}
$n$ & $0$ & $1$ & $2$ & $3$ & $4$ & $5$ & $6$ & $7$ & $8$ & $9$ & $10$ & $11$ & $12$ & $13$ & $14$ & $15$ & $16$   \\
\hline
$|\mathcal{H}_{n,1}|$ & 1 & 1 & $2$ & $3$    & $4$ &   $6$&   $8$ & $11$ & $14$ & $19$ & $24$ & $31$ & $39$ & $49$ & $61$ & $76$ & $93$\\
$|\mathcal{H}_{n,2}|$ & 1 & 1 & $1$ & $2$   & $3$ &   $4$&   $5$ & $7$ & $9$ & $12$ & $15$ & $19$ & $24$ & $30$ & $37$ & $46$ & $56$\\
$|\mathcal{H}_{n,3}|$ & 1 & 1 & $1$ & $1$    & $2$ &   $3$&   $4$ & $5$ & $6$ & $8$ & $10$ & $13$ & $16$ & $20$ & $24$ & $30$ & $36$\\
\end{tabular}

\begin{center}
Table 6. Initial values of $(|\mathcal{H}_{n,\ell}|)_{n=0}^\infty$ for small $1\leqslant \ell\leqslant 3$.
\end{center}


\ \\
\end{document}